\definecolor{darkblue}{rgb}{0.0, 0.0, 0.55}
\DeclareMathOperator{\btv}{btv}
\DeclareMathOperator{\symm}{Diff}
\DeclareMathOperator{\coPos}{COP}
\DeclareMathOperator{\cop}{COP}
\DeclareMathOperator{\COP}{COP}
\DeclareMathOperator{\DNN}{DNN}
\DeclareMathOperator{\psd}{PSD}
\DeclareMathOperator{\SPN}{SPN}
\DeclareMathOperator{\nn}{NN}
\DeclareMathOperator{\Sos}{SOS}
\DeclareMathOperator{\Vol}{Vol}
\DeclareMathOperator{\CP}{CP}
\DeclareMathOperator{\cp}{CP}
\DeclareMathOperator{\Tr}{tr}
\DeclareMathOperator{\vrad}{vrad}
\numberwithin{equation}{section}
\newcommand{\x}{{\tt x}}
\newcommand{\Sym}{\mathbb{S}}
\newcommand{\RR}{\mathbb R}
\newcommand{\NN}{\mathbb N}
\newcommand{\cB}{\mathcal B}
\newcommand{\cH}{\mathcal H}
\newcommand{\dd}{{\rm d}}
\newtheorem{theorem}{Theorem}[section]
\theoremstyle{definition}
\newtheorem{definition}[theorem]{Definition}
\newtheorem{remark}[theorem]{Remark}
\newtheorem*{rep@thm}{\rep@title}
\newcommand{\newreptheorem}[2]{%
	\newenvironment{rep#1}[1]{%
		\def\rep@title{#2 \ref{##1}}%
		\begin{rep@thm}}%
		{\end{rep@thm}}}
\title[A random copositive matrix is CP with positive probability]{
A random copositive matrix is completely positive with positive probability}
\author[T.\ \v Strekelj]{Tea \v Strekelj${}^1$}
\address{Tea \v Strekelj, Famnit, University of Primorska, Koper \&
Institute of Mathematics, Physics and Mechanics, Ljubljana, Slovenia
   }
   \email{tea.strekelj@famnit.upr.si}
\thanks{${}^Q$This work was performed within the project COMPUTE, funded within the QuantERA II Programme that has received funding from the EU's H2020 research and innovation programme under the GA No 101017733 {\normalsize\euflag}}
\author[A. Zalar]{Alja\v z Zalar${}^{2,Q}$}
\address{Alja\v z Zalar, 
Faculty of Computer and Information Science, University of Ljubljana  \& 
Faculty of Mathematics and Physics, University of Ljubljana  \&
Institute of Mathematics, Physics and Mechanics, Ljubljana, Slovenia.}
\email{aljaz.zalar@fri.uni-lj.si}
\thanks{${}^1$Supported by the Slovenian Research Agency grant J1-60011.}
\thanks{${}^2$Supported by the Slovenian Research Agency 
program P1-0288 and grants J1-50002, J1-3004, J1-60011.}
\subjclass[2020]{13J30, 47L07, 52A40 (Primary); 90C22, 90C27 (Secondary)}
\date{\today}
\keywords{copositive matrix, completely positive matrix, positive polynomial, sum of squares, convex cone}
\title{Construction of exceptional copositive matrices}
\begin{document}
\setcounter{tocdepth}{3}
\contentsmargin{2.55em}
\dottedcontents{section}[3.8em]{}{2.3em}{.4pc}
\dottedcontents{subsection}[6.1em]{}{3.2em}{.4pc}
\dottedcontents{subsubsection}[8.4em]{}{4.1em}{.4pc}


\begin{abstract}
	An $n\times n$ symmetric matrix $A$ is copositive if the quadratic form $x^TAx$	
	is nonnegative on the nonnegative orthant $\RR^{n}_{\geq 0}$. 
  	The cone of copositive matrices contains the cone of matrices which are the sum of a positive semidefinite matrix and a nonnegative one and the latter contains the cone of 
        completely positive matrices. These are the matrices
        of the form $BB^T$ for some $n\times r$ matrix $B$ with nonnegative entries. The above inclusions are strict for $n\geq5.$
        The first main result of this article is a free probability inspired construction of exceptional 
        copositive matrices of all sizes $\geq 5$, i.e., copositive matrices that are not the sum of a positive semidefinite matrix and a nonnegative one.
        The second contribution of this paper addresses the asymptotic ratio of the volume radii of compact sections of the cones of copositive and completely positive matrices. In a previous work by the authors, it was shown that, by identifying symmetric matrices naturally with quartic even forms, and equipping them with the $L^2$ inner product and the Lebesgue measure, the ratio of the volume radii of sections with a suitably chosen hyperplane is bounded below by a constant independent of $n$ as $n$ tends to infinity. In this paper, we extend this result by establishing an analogous bound when the sections of the cones are unit balls in the Frobenius inner product.
\end{abstract}

\maketitle

\section{Introduction}

Copositive and completely positive matrices have gained considerable attention in recent years. They appear in combinatorial analysis, computational mechanics,  dynamical systems, control theory and especially in optimization.  
 This is because many combinatorial and nonconvex quadratic optimization problems can be formulated as linear problems over the larger cone of copositive or the smaller cone of completely positive matrices \cite{KP02,Bur09,RRW10,DR21}.
In this article we streamline the proof of the main result of \cite{KSZ}, which compares the asymptotic volumes of these two cones of matrices. Moreover, we give an explicit construction of \textit{exceptional matrices}, i.e., matrices that belong to the larger cone, but not to the smaller one.

\subsection{Notation}
    For $n\in \NN$ denote by $M_n(\RR)$ the $n\times n$ real matrices and let $\Sym_n=\left\{ A\in M_n(\RR)\colon A^T=A\right\}$ be its subspace of real symmetric matrices,
    where $^T$ stands for the usual transposition of matrices.
    Denote by $\RR[\x]$ be the vector space of real polynomials in the variables $\x=(x_1,\ldots,x_n)$ and let
     $\RR[\x]_k$ be its subspace of \textbf{forms of degree} $k$, i.e., 
    homogeneous polynomials from $\RR[\x]$ of degree $k$.
    To any matrix 
        $A=[a_{ij}]_{i,j=1}^n\in \Sym_n$ 
    we associate the quadratic form 
    \begin{equation}
	\label{correspondence-quadratic}
		p_A(\x):=\x^T A\x=\sum_{i,j=1}^n a_{ij}x_ix_j\in \RR[\x]_2.
	\end{equation}

\subsection{Basic definitions and background}
This article  studies the inclusion properties of the cones of the following classes of matrices.

\begin{definition}
\label{def-cones}
    A matrix $A\in \Sym_n$ is:
    \begin{enumerate}
    \item 
    \textbf{copositive} 
    if $p_A$ is nonnegative  
	on the nonnegative orthant 
		$$\RR^{n}_{\geq 0}:=\{(x_1,\ldots,x_n)\colon x_i\geq 0,\ i=1,\ldots,n\},$$
	i.e., $p_A(\x)\geq 0$ for every $\x\in\RR^{n}_{\geq 0}$.
    Equivalently, $A$ is copositive iff the quartic form 
     \begin{equation}\label{eq:q_A}
     q_A(\x):=p_A(x_1^2,\ldots,x_n^2)\in \RR[\x]_4
     \end{equation}
    is nonnegative on $\RR^n$.        
    We write $\coPos_n$ for the cone of all $n\times n$ copositive matrices.
    \item
    \textbf{positive semidefinite (PSD)}	 
        if all of its eigenvalues are nonnegative.
    Equivalently, $A$ is PSD 
        iff $p_A(\x)\geq 0$ for all $\x\in \RR^n$
        iff $A=BB^T$ for some matrix $B\in M_n(\RR)$.
	We write $A\succeq 0$ to denote that $A$ is PSD and $\psd_n$ stands for the cone of all $n\times n$ PSD matrices.
    \item   
        \textbf{nonnegative (NN)}
        if all of its entries are nonnegative, i.e., 
        $A=[a_{ij}]_{i,j=1}^n$ with 
        $a_{ij}\geq 0$ for $i,j=1,\ldots,n$.
    We write $\nn_n$ for the cone of all $n\times n$ NN matrices.
    \item 
    \textbf{SPN}
    (sum of a positive semidefinite matrix and a nonnegative one)
        if it is of the form $A=P+N$, 
        where 
            $P\in \psd_n$
        and 
            $N\in \nn_n$.
    We write $\SPN_n:=\psd_n+\nn_n$ for the cone of all $n\times n$ SPN matrices.
    \item   
    \textbf{doubly nonnegative (DNN)}
        if it is PSD and NN.
    We use $\DNN_n:=\psd_n\cap \nn_n$ for the cone of all $n\times n$ DNN matrices.
    \item   
    \textbf{completely positive (CP)}{\footnote{Despite the similar name, the CP matrices considered here are not related to the CP maps ubiquitous in operator algebra \cite{Pau02}.}}
        if $A=BB^T$ for some $r\in\NN$ and $n\times r$ entrywise nonnegative matrix $B$.
    We write $\cp_n$ for the cone of all $n\times n$ CP matrices.
    \end{enumerate}
\end{definition}

 The presented matrices clearly form the following chain of inclusions:
    \begin{equation}
        \label{101222-1515}
            \coPos_n
            \supseteq 
                \SPN_n
            \supseteq 
                \psd_n\cup\nn_n
            \supseteq
                \DNN_n
            \supseteq
                \cp_n.
    \end{equation}

After formulating a combinatorial problem as a conic linear problem over $\cop_n$ or $\cp_n,$ the complexity of the problem is reduced to the constraints of the respective cone. However, the membership problem for $\COP_n$ is co-NP-complete \cite{MK87} and NP-hard for $\CP_n$ \cite{DG14}.
For this reason, Parrilo \cite{Par00} proposed an increasing hierarchy of  cones
$K_n^{(r)}:=\{A\in \Sym_n\colon (\sum_{i=1}^n x_i^2)^r \cdot p_A(\x) \text{ is a sum of squares of forms}\},$ which give a tractable inner approximation of the cone $\COP_n$ based on semidefinite programming.
Clearly, 
\begin{equation}
\label{Parrilo-approx}
	\bigcup_{r\in \NN_0}  K_n^{(r)}\subseteq \COP_n,
\end{equation}
and 
 a result of P\'olya \cite{Pol28} gives a statement on the quality of the approximation, namely
	$\mathrm{int}(\COP_n)\subseteq \bigcup_{r\in \NN_0} K_n^{(r)}.$
It was shown in \cite[p.\ 63--64]{Par00} that $K_n^{(0)}=\SPN_n$. 
Also, $\SPN_n=\COP_n$ for $n\leq 4$  by \cite{MM62}. Whence, $K_n^{(0)}=\COP_n$
and the inclusion in \eqref{Parrilo-approx} is in fact equality for $n\leq 4$ (see also \cite{Dia62}).
However, for $n\geq 5$, the cone $K_n^{(0)}$ is strictly contained in $\COP_n.$ For $n=5,$ the strict inclusion is testified by the so-called Horn matrix \cite{HN63}

\begin{equation}
\label{Horn-matrix}
H=
\begin{pmatrix*}[r]
	1 & -1 & 1 & 1 & -1 \\
	-1 & 1 & -1 & 1 & 1 \\
	1 & -1 & 1 & -1 & 1 \\
	1 & 1 & -1 & 1 & -1 \\
	-1 & 1 & 1 & -1 & 1 
\end{pmatrix*}
\end{equation}
giving a standard example of a copositive matrix that is not $\SPN$.

Further, $H\in K_5^{(1)}$ by \cite{Par00}, but $\COP_5\neq K_5^{(r)}$ for any $r\in \NN$ by \cite{DDGH13}.
Very recently it was shown in \cite{LV22b,SV+} that for $n=5,$ the inclusion in \eqref{Parrilo-approx} is also equality,
while for $n\geq 6,$ the inclusion becomes strict \cite{LV22a}. The aim of this paper is first  to find further testimonies of the gap between $\cop$ and $\cp$ matrices. Next, we imply that this construction cannot be randomized by showing that the
asymptotic ratio of the volume radii of compact sections of the cones $\cop$ and $\cp$ is strictly positive as $n$ goes to infinity.
A nice exposition on the classes of matrices defined above can be found in \cite{BSM21} and
some open problems regarding $\COP$ and $\CP$ are presented in \cite{BDSM15}.

\subsection{Main results} \label{sec: main}

The first main result is a bootstrap method to find exceptional doubly nonnegative (e-DNN) matrices, i.e., doubly nonnegative matrices that are not completely positive. We first find a seed e-DNN matrix of size $5\times 5,$ which then gives rise to a family of e-DNN matrices of arbitrary sizes $\geq 5.$

The construction is inspired by the free probability construction in \cite{CHN17} of positive maps between matrix spaces that are not completely positive. For each $f \in L^\infty [0,1]$ consider the corresponding multiplication operator $M_f$ on $L^2[0,1];$ that is
$$
M_f\, g = fg
$$
for $g \in L^2[0,1].$
With respect to the standard orthonormal basis for $L^2[0,1]$ given by
\begin{equation}
        \label{basis-1}
        \cB:=  \big\{1 \big\} \cup
        \big\{\sqrt{2}\cos(2k\pi x)\colon k\in \NN \big\}\cup 
        \big\{\sqrt{2}\sin(2k\pi x)\colon k\in \NN\big\},
    \end{equation}
 each such multiplication operator can be represented by an infinite matrix. 
 For a closed subspace $\cH\subseteq L^2[0,1]$  denote by $P_\cH:L^2[0,1]\to \cH$ the orthogonal projection onto $\cH$. Then for any $f \in \cH,$ the operator $M_f^{\cH}:=P_\cH M_fP_\cH$ is in fact a multiplication operator on $\cH$ and can be as well represented by a (possibly infinite) matrix.
    Our idea is to find an infinite dimensional $\cH$ and an $f \in \cH$ such that
    $M_f^{\cH}$
    has all 
    finite principal submatrices
    $\DNN$ but not $\CP$.

\subsubsection{Construction of exceptional DNN matrices of all sizes $\geq 5$}
\label{construction-basic-idea}

The setting in which we work is the following:
    \begin{align}
        \label{form-of-f}
        &f \text{ is of the form }
           1+ 2 \sum_{k=1}^m a_k\cos(2k\pi x ),\quad 
            m\in \NN,\; a_1\geq 0,\ldots,a_m\geq 0,\\
        \nonumber
        &\cH\subseteq L^2[0,1]
        \text{ is spanned by the functions }\cos(2k\pi x),k\in \NN_0.
    \end{align}
    
    For $n\in \NN$, let $\cH_n$ be the finite-dimensional subspace of $\cH$
    spanned by the functions 
        $$1,\sqrt{2}\cos(2\pi x),\ldots,                  
            \sqrt{2}\cos(2(n-1)\pi x)$$
    and let $P_n:\cH\to \cH_n$ the orthogonal projection onto $\cH_n$.
    Clearly, all the matrices
    \begin{equation}
        \label{candidates-DNN-not-CP}
            A^{(n)}:=P_nM_f^\cH P_n,\; n\in\NN
    \end{equation}
    are $\nn$ since $f$ has positive Fourier coefficients. 
    To certify that they 
    are $\psd$, 
    we impose the condition that $f$ is a sum of squares ($\Sos$) of trigonometric polynomials, i.e., 
    \begin{equation}
        \label{f-psd-certify}
        f=v^TBv, \quad \text{where}\quad B\in\psd_{m'+1}
    \end{equation}
    and
    \begin{equation*}
        \label{vector-v}
        v^T
        =\begin{pmatrix}
            1 & \cos(2\pi x) & \cdots & \cos(2m'\pi x)
        \end{pmatrix}\quad\text{for some }m'\leq m.
    \end{equation*}
    Finally, to achieve that $A^{(n)}\not\in \CP_n$ for $n\geq 5$, we demand that
    \begin{equation}
        \label{not-cp}
            \langle A^{(5)},H\rangle<0,
    \end{equation}
    where $H$ is the Horn matrix of \eqref{Horn-matrix} and $\langle\cdot,\cdot\rangle$ is the usual Frobenius inner product on symmetric matrices, i.e., 
    $\langle A,B\rangle=\Tr(AB)$. Since  CP matrices are dual to the COP matrices w.r.t.\ the Frobenius inner product, this condition indeed certifies that $A^{(n)}\not\in \CP_n$ for all $n$ as we explain in Subsection \ref{subsec-findex}.

Now let $m=6$. The above construction can be implemented via the following feasibility SDP\looseness=-1
\begin{equation}
\begin{split}\label{eq-sdp}
	&\Tr(A^{(5)}H) = -\epsilon,\\ 
	&f = v^{\text{T}}Bv \quad \text{with}\quad B\succeq 0,\\
	&a_i \geq 0, \quad  i = 1, \ldots, 6,
\end{split}
\end{equation}
where $\epsilon>0$ is predetermined (small enough).
Solving \eqref{eq-sdp} for  different values of $\epsilon$ and 
$m^\prime \leq 6,$ Mathematica's semidefinite optimization solver gives an exceptional DNN matrix $A^{(5)}$ (see Subsection \ref{subsec-ex} for an explicit example).
 We remark that the idea is to search for an $f$ as in \ref{form-of-f} with the smallest $m$ as possible to reduce the complexity of the SDP \ref{eq-sdp}.
 The choice of $m=6$ seemed optimal from our experiments.

\subsubsection{Construction of exceptional copositive matrices of all sizes $n\geq 5$}
    To construct exceptional copositive matrices of arbitrary size we proceed as follows. For $n \geq 5$ let $A^{(n)}$ be a DNN matrix constructed by the above procedure.
    To obtain an exceptional copositive matrix $C$ of size $n\times n$ we {impose the conditions}
    \begin{align}
        \label{not-SPN}
        \begin{split}
        &\langle A^{(n)},C\rangle<0,\\
        &\big(\sum_{i=1}^nx_i^2\big)^kq_C \text{ is SOS for some }k\in \NN
        \end{split}
    \end{align}
    with $q_C$ as in \eqref{eq:q_A}.
    Searching for $C$ satisfying \eqref{not-SPN}
    for fixed $k$
    can again be formulated as a feasibility SDP. For an explicit example obtained in this way see Subsection \ref{subsec-ex}.\looseness=-1

\subsubsection{Second main result}

Let $V$ be a finite-dimensional Hilbert space equipped with the pushforward measure of the Lebesgue measure $\mu$ on $\RR^{\dim V}$. 
A natural way to compare the volumes of two cones $K_1$, $K_2$ in $V$ is to compare the compact sections of both cones when intersected with some ``fair" subset of $V$. A seemingly fair choice  is the unit ball $B$ of $V$. In this case the task is to derive an estimate for 
the so--called
\textbf{ball-truncated volume of $K_i$} \cite{ST15a}      $$\btv(K_i):=\Vol(K_i\cap B),$$ 
where the volume $\Vol$ is computed with respect to $\mu$. 
If one is interested only in the asymptotical behaviour of the volume difference, then comparing 
\textbf{volume radii} $\vrad(K_i\cap B)$, defined by \looseness=-1
	$$\vrad(K_i\cap B)=\left(\frac{\btv(K_i\cap B)}{\Vol(B)}\right)^{1/\dim V},$$
is equally informative (see also \cite[Remark 2.4]{KSZ} for a detailed discussion on the ratio of volumes versus the ratio of volume radii).

Let $B_n\subseteq \Sym_n$ be the unit ball w.r.t.~the Frobenius inner product.
Our second main result compares the sizes of the convex cones $K$ from Definition \ref{def-cones} 
by comparing the volumes of their intersections 
with $B_n$,
i.e., $K^{(B_n)}:=K\cap B_n$.

\begin{repthm}{vrad-of-our-sets} 
    \label{intro-vrad-of-our-sets} 
    We have that
    \begin{align*}
            \frac{1}{8\sqrt{2}}
            &\leq 
            \vrad(\CP^{(B_n)}_n)
            \leq
            \vrad(\nn^{(B_n)}_n)
            =\frac{1}{2}
            \leq 
            \vrad(\COP^{(B_n)}_n)
            \leq 
            1.
    \end{align*}
    In particular,
    $$
    \frac{1}{8\sqrt{2}}
    \leq
    \frac
    {\vrad(K^{(B_n)}_n)}{\vrad(\COP^{(B_n)}_n)}
    \leq 
    1,
    $$
    where $K\in \{\CP,\DNN,\psd,\nn,\SPN\}$.
\end{repthm}

\begin{remark}
\begin{enumerate}
\item 
Deriving tight estimates for the ball-truncated volume $\btv(K^{(B)})$ of a cone $K$ is very demanding and infeasible for most cones in dimensions beyond 3. This is due to the fact that the conditions defining the section $K^{(B)}$ are quadratic in the coordinates of $\RR^{\dim V}$. 
To compensate on this problem one approach is to compare the cones when intersected with a suitably chosen half-space or equivalently, a hyperplane.
For $x\in V$ let
$H_x:=\{u\in V\colon \langle x,u\rangle \leq 1\}$
be a closed halfspace and 
$\partial H_x:=\{u\in V\colon \langle x,u\rangle= 1\}$ its boundary hyperplane.
\begin{enumerate}
    \item 
    A comparison of the sizes of the cones can be done in terms of their \textbf{least partial volumes} \cite[Definition 1.1]{ST15a}. The least partial volume of a cone $K$ is the smallest volume of $K \cap H_x$,
    where $x$ runs over the unit sphere in $V$.
\item When we want to compare the cones $K_1$, $K_2$ that are dual to each other in the inner product on $V$, i.e., 
$$K_2=K_1^\ast=
\{u\in V\colon \langle x,u\rangle \geq 0
\text{ for all }x\in K_1\},$$
a ``fair" choice of $x$ in $H_x$ is the so-called \textbf{volumetric center} $\rho(K_1)$ of $K_1$ \cite[Definition 1.2]{ST15a}. This is because $\rho(K_1)$ is the centroid of $K_2\cap \partial H_{\rho(K_1)}$ (see \cite[p.\ 2 and Lemma 5.2]{ST15b}), which is equivalent to the fact that $\rho(K_1)$ is the so--called Santal\'o point of $K_1\cap \partial H_{\rho(K_1)}$, for which the Blaschke--Santalo inequality \cite[p.\ 90]{MP90} can be applied.
\end{enumerate}
\item 
In our previous work \cite{KSZ} on estimating
the quantitative gap between $\COP_n$ and $\CP_n$
we used the identification with quartic even forms
$\eqref{eq:q_A}$ and then compared volumes of the corresponding cones in positive even quartics. The inner product is taken to be the $L^2$ norm, i.e., 
$\langle f,g\rangle = \int_{S^{n-1}} fg\; \dd\sigma$, where $\sigma$ is the rotation invariant probability measure on the unit sphere $S^{n-1} \subset \RR^n.$ 
In this context, determining conclusive ball-truncated volumes is challenging. Therefore, we selected an appropriate hyperplane $H_x$, where
$x$ is a multiple of $x_1^2+\ldots+x_n^2$, and then derived volume estimates for the intersections of all the cones with $\partial H_x$. For a detailed discussion on the choice of $x$ see \cite[Section 2.2]{KSZ}.
\item 
The proof of Theorem \ref{intro-vrad-of-our-sets} is significantly less demanding than the proof of the corresponding result in \cite{KSZ}, namely Theorem 1.4. To establish the latter, we had to use two distinct inner products: in addition to the $L^2$
  inner product, we also employed the so-called differential inner product on even quartics, which resembles the Frobenius inner product on matrices. The main observations for deriving the estimates relied on the differential inner product, while the connection between the volumes of the sets in both inner products, developed by Blekherman in \cite{Ble04,Ble06}, was crucial. Proving Theorem \ref{intro-vrad-of-our-sets} closely follows the observations regarding the differential inner product from \cite{KSZ}.
\end{enumerate}
\end{remark}

\section{Construction of exceptional doubly nonnegative  and exceptional copositive matrices}
In this section we describe the details of the bootstrap method outlined in Subsection 
\ref{sec: main} to find exceptional doubly nonnegative (e-DNN) and exceptional copositive (e-COP) matrices.
The idea is to find a seed e-DNN matrix of size $5 \times 5$ that is the compression of a multiplication operator $M_f$ for a sum of squares cosine trigonometric polynomial $f$ using a semidefinite optimization program (SDP).
From the seed matrix we then read off the (finitely many) Fourier coefficients of $f.$ 
Finally, we argue that all the larger finite compressions (principal submatrices) of $M_f$ are e-DNN as well.
Using the constructed e-DNN matrices we produce a corresponding family of exceptional copositive matrices.

\subsection{Justification of the construction of a family of e-DNN matrices from a seed e-DNN matrix of size $5\times5$} \label{subsec-findex}
Recall that the function $f$ we are looking for is of the form
$$
f(x) = 1+ 2 \sum_{k=1}^6 a_k\cos(2k\pi x)
$$
with $a_1,\ldots,a_6\geq 0$ (here we immediately set $m=6$ as in \eqref{form-of-f}). Also, in \eqref{candidates-DNN-not-CP}, we defined $A^{(n)} = (A^{(n)}_{jk})_{j,k}$ to be the $n \times n$ principal submatrix of the infinite matrix pertaining to the multiplication operator $M_f^{\cH}$ on $\cH.$ Here $\cH$ is the closed subspace of $L^2[0,1]$    spanned by the $\cos(2k\pi x)$, $k\in \NN_0.$
The technical reasons why we restrict to $\cH$  instead of considering the entire $L^2[0,1]$ are discussed in Remark \ref{rem-sin}. The restriction to matrices of size $n\geq 5$ is clear from the introduction since $\DNN_n=\cp_n$ for $n \leq 4.$

To find the general form of $A^{(n)}$   note that 
\begin{align*}
	A_{jk}^{(n)} &= \int_{0}^{1}f(x)\cos(2(j-1)\pi x) \cos(2(k-1)\pi x)\,dx \quad \text{for}  \ \  j,k=1,\ldots,n,
\end{align*}
where the integration is with respect to the Lebesgue measure on $[0,1].$ Using the well-known trigonometry formula involving the cosine product identity, the products of different cosine functions can be replaced with linear combinations of cosine functions with higher and lower frequency, i.e.,
\begin{align}\label{eq-trigcos}
	\cos(2j\pi x)\cos(2k\pi x) &= \frac{1}{2}\Big( \cos\big(2(j-k)\pi x)\big) + \cos\big(2(j+k)\pi x\big) \Big).
\end{align}
From \eqref{eq-trigcos} it follows  that 
    \begin{equation}
        \label{prod-three-cos}
        \int_0^1 
        \cos(2j\pi x)\cos(2k\pi x)\cos(2\ell\pi x) dx
        =
        \left\{
        \begin{array}{rl}
            \frac{1}{2},&   \text{if }j=\ell, k=0,\\[0.5em]
            \frac{1}{4},&   \text{if }k\neq 0 
                            \text{ and }j\in\{\ell+k,\ell-k\},\\[0.5em]
            0,&             \text{otherwise}.
        \end{array}
        \right.
    \end{equation}
Using \eqref{prod-three-cos} it is now easy to compute that for $A^{(5)}$ to be the $5 \times 5$ compression of $M_f^{\mathcal{H}},$ it must be of the form
\begin{equation}\label{eq-matA}
	A^{(5)}=
	\begin{pmatrix}
		1 & \sqrt{2} a_1 & \sqrt{2} a_2 & \sqrt{2} a_3 & \sqrt{2} a_4 \\
		\sqrt{2} a_1 & a_2+1 & a_1+a_3 & a_2+a_4 & a_3+a_5 \\
		\sqrt{2} a_2 & a_1+a_3 & a_4+1 & a_1+a_5 & a_2 + a_6 \\
		\sqrt{2} a_3 & a_2+a_4 & a_1+a_5 & 1 + a_6 & a_1 \\
		\sqrt{2} a_4 & a_3+a_5 & a_2 + a_6 & a_1 & 1
	\end{pmatrix}.
\end{equation}
Thus demanding that $a_i \geq 0$ for $i=1, \ldots, 6$ 
certifies that $A^{(5)}$ is $\nn$. By the same reasoning
$A^{(n)}$ is $\nn$ for every $n\geq 5$.

Further on, $f$ being of the form \eqref{f-psd-certify}
is equivalent to $f$ being a sum of squares of trigonometric polynomials \cite[Lemma 4.1.3]{Mar08}. 
This implies that all matrices
$A^{(n)}=P_nM_f^\cH P_n$ as in \eqref{candidates-DNN-not-CP}
are PSD. Indeed, suppose
$$
f = \sum_{i=0}^k \big(\underbrace{\sum_{j=0}^{m'}h_{ij}\cos(2j\pi x)}_{h_i}\big)^2
$$
for some $k$ and $h_{ij}\in\RR$. 
Since $f$ and the $h_i$ are in $\mathcal{H},$ clearly $M_f^{\mathcal{H}}$ and the $M^{\mathcal{H}}_{h_i}$ are multiplication operators on $\mathcal{H}$ and 
$$
M_f^{\mathcal{H}} = \sum_{i=1}^{k}\big(M^{\mathcal{H}}_{h_i}\big)^2.
$$
Here each $M^{\mathcal{H}}_{h_i}$ is self-adjoint, from which the claim follows.

Finally, we justify why \eqref{not-cp} implies that
$P_nM_f^{\cH}P_n$ is not $\CP$ for any $n\geq 5$.
Since CP matrices are dual to copositive matrices
in the usual Frobenius inner product,
\eqref{not-cp} certifies that $A^{(5)}$ is not $\CP$. 
Now the equality 
\begin{equation}
    \label{relation-A5-to-An}
    A^{(5)}=P_5(P_nM_f^{\cH}P_n)P_5=P_5A^{(n)}P_5
\end{equation}
for any $n\geq 5$, implies that $A^{(n)}$ is not $\CP$
for any $n\geq 5$.
Indeed, suppose that $A^{(n)}=BB^{\text{T}}$
for some $n\geq 5$ and (not necessarily square) matrix $B$ with nonnegative entries. By \eqref{relation-A5-to-An},
$A^{(5)}=P_5B (P_5B)^{\text{T}}$ and since $P_5$ only has $0,1$ entries, this contradicts $A^{(5)}$ not being $\CP$.

\subsection{Justification of the construction of exceptional COP matrices from exceptional DNN matrices}

It remains to justify our procedure for constructing an exceptional copositive matrix $C$ of size $n \times n$ for any $n \geq 5$ from the obtained e-DNN matrix $A^{(n)}.$ Since SPN matrices are dual to the DNN matrices in the Frobenius inner product, 
the first condition in \eqref{not-SPN},
$$
\langle A^{(n)},C\rangle<0,
$$
implies that $C$ is not SPN. 
On the other hand, the second condition in \eqref{not-SPN},
$$
\big(\sum_{i=1}^nx_i^2\big)^kq_C \text{ is SOS for some }k\in \NN,
$$
is a relaxation of the copositivity of $C$
and it clearly implies that $q_C$ is nonnegative on $\RR^n.$ Whence, $C$ is COP. We remark that in practice, it suffices to consider only $k=1$ or $k=2.$ 

\begin{remark}\label{rem-sin} 
    We explain the reason for restricting $M_f$ to the closed subspace $\mathcal{H}$ of $L^2[0,1]$ generated by the cosine functions. 
	As in Subsection \ref{construction-basic-idea}, with respect to the standard orthonormal basis for $L^2[0,1]$ given by
	\begin{equation}\label{eq-basis}
		1,\ \sqrt{2} \cos\big(2k\pi x\big),\  \sqrt{2} \sin\big(2k\pi  x\big)
	\end{equation}
	for $k \in \NN,$ each multiplication operator $M_f$ for $f \in L^\infty[0,1]$ can be represented by an infinite matrix.
	
	It seems natural to start by considering the entire space $L^2[0,1]$ and compressions $\widetilde P_nM_f\widetilde P_n$ of $M_f$ for some trigonometric polynomial $f$ and $n \geq 2,$ onto the $(2n+1)$-dimensional span $\widetilde \cH_n$ of the functions in \eqref{eq-basis} for $k=1, \ldots, n$. Here $\widetilde P_n:L^2[0,1]\to\widetilde\cH_n$ are orthogonal projections.  
	
	 Suppose that $A$ is the $5\times 5$ compression of $M_f$ and is given with respect to the ordered (orthonormal) basis consisting of the functions
	\begin{align*} 
		1,\sqrt{2}\cos(2\pi x), \sqrt{2}\cos(4\pi x),
		\sqrt{2}\sin(2\pi x), \sqrt{2}\sin(4\pi x).
	\end{align*}
	 Moreover, assume that the corresponding function $f$ has finite Fourier series
	 \begin{equation}\label{eq-fourierf}
	 	f(x) = 1 + 2 \sum_{k=1}^{m} a_k \cos(2k\pi x) + 2 \sum_{k=1}^{m} b_{k} \sin(2k\pi x)
	 \end{equation}
	 for some $m \in \NN$ and real numbers $a_k,b_k$ with $k=1, \ldots,m.$
	 Again, using well-known trigonometry formulas involving product identities, i.e.,
	 \begin{equation}
	 \begin{split}\label{eq-badtrig}
	 	\sin(2j\pi x)\sin(2k\pi x)& = \frac{1}{2}\Big( \cos\big(2(j-k)\pi x)\big) - \cos\big(2(j+k)\pi x\big) \Big),\\
	 	\cos(2j\pi x)\sin(2k\pi x)& = \frac{1}{2}\Big( \sin\big(2(k-j)\pi x)\big) + \sin\big(2(j+k)\pi x\big) \Big)
	 \end{split}
 	\end{equation}
 	 in addition to \eqref{eq-trigcos},
	 it is easy to compute that for $A$ to be the $5 \times 5$ compression of a multiplication operator $M_f$ for $f$ as in \eqref{eq-fourierf} with $m \geq 4$, it must be of the form
	\begin{equation*}
		A=
		\begin{pmatrix}
			1 & \sqrt{2} a_1 & \sqrt{2} a_2 & \sqrt{2} b_1 & \sqrt{2} b_2  \\
			\sqrt{2} a_1 & a_2+1 & a_1+a_3  & b_2 & b_1+b_3  \\
			\sqrt{2} a_2 & a_1+a_3 & a_4+1 & b_3-b_1 & b_4  \\
			\sqrt{2} b_1 & b_2 & b_3-b_1 & 1-a_2 & a_1-a_3  \\
			\sqrt{2} b_2 & b_1+b_3 & b_4  & a_1-a_3 & 1-a_4 
		\end{pmatrix}.
	\end{equation*}

Note that since we want all the finite-dimensional compressions of $M_f$ to be NN, $f$ needs to have an infinite Fourier series.
Indeed, suppose $f$ has finite Fourier series as in \eqref{eq-fourierf} for some $m \in \mathbb{N}.$ Then for all $j,k$ with $k<j\leq m$ and $m<j+k,$ the $(j+1,k+m+1)$-entry of the compression $\widetilde P_m M_f \widetilde P_m,$ 
\begin{align*}
&\int_{0}^{1}f(x)\, \sqrt{2}\cos(2j\pi x) \,\sqrt{2} \sin(2k\pi x)\,dx = \\ &\int_{0}^{1}f(x)\sin\big(2(k-j)\pi x)\big) \,dx \ +\  \int_{0}^{1}f(x)\sin\big(2(j+k)\pi x)\big) \,dx,
\end{align*}
equals $-a_{j-k}.$ 
Furthermore, we see from \eqref{eq-badtrig} that the Fourier sine coefficients of $f$ must satisfy
$$
b_{j+k} \geq b_{j-k}
$$
for all $k,j.$ 
But the containment $f\in L^2[0,1]$ implies that $b_k = 0$ for all $k.$ 
Hence, $f$ has a Fourier cosine series. To avoid technical difficulties, we thus restrict our attention to $\mathcal{H}.$
\end{remark}

\subsection{Examples}\label{subsec-ex}
\subsubsection{A seed e-DNN $5 \times 5$ matrix}
Let $\epsilon= 1/20.$ Solving the SDP \eqref{eq-sdp} with this parameter and rationalizing the solution \cite{PP08,CKP15} yields the $5\times 5$ compression 
\begin{equation}\label{eq-exA}
\begin{split}
A^{(5)} = 
\begin{pmatrix}
	1 & \frac{16 \sqrt{2}}{27} & \frac{\sqrt{2}}{123} & \frac{1}{147 \sqrt{2}} & \frac{5 \sqrt{2}}{21} \\[0.5em]
	\frac{16 \sqrt{2}}{27} & \frac{124}{123} & \frac{1577}{2646} & \frac{212}{861} & \frac{1205}{8526} \\[0.5em]
	\frac{\sqrt{2}}{123} & \frac{1577}{2646} & \frac{26}{21} & \frac{572}{783} & \frac{1777340 \sqrt{2}-2413803}{3254580} \\[0.5em]
	\frac{1}{147 \sqrt{2}} & \frac{212}{861} & \frac{572}{783} & \frac{1777340 \sqrt{2}+814317}{3254580} & \frac{16}{27} \\[0.5em]
	\frac{5 \sqrt{2}}{21} & \frac{1205}{8526} & \frac{1777340 \sqrt{2}-2413803}{3254580} & \frac{16}{27} & 1 \\
\end{pmatrix}.
\end{split}
\end{equation}
By comparing the above $A^{(5)}$ with the general form \eqref{eq-matA} we read off the Fourier coefficients of the corresponding function $f$ as in \eqref{f-psd-certify}, i.e.,
\begin{align*}
	f(x) =  1 + &\frac{32}{27}\cos(2\pi x) + \frac{2}{123}\cos(4\pi x) + \frac{1}{147}\cos(6\pi x) \\
	&+\frac{10}{21}\cos(8\pi x) + \frac{8}{29}\cos(10\pi x) \\ 
	&+\frac{-2440263 + 1777340 \sqrt{2}}{1627290}\cos(12\pi x).
\end{align*}
This function is indeed SOS, since we have $f=v^{\text{T}}Bv$ for $v$ as in \eqref{vector-v} with $m^\prime = 3$ and 
$$
B=
\begin{pmatrix}
	\frac{9}{22} & \frac{7}{37} & -\frac{3}{22} & -\frac{206923}{5678316} \\[0.5em]
	\frac{7}{37} & \frac{336929}{243540}-\frac{88867 \sqrt{2}}{162729} & \frac{2210}{28971} & \frac{88867}{162729 \sqrt{2}}-\frac{200129}{487080} \\[0.5em]
	-\frac{3}{22} & \frac{2210}{28971} & \frac{46466763-19550740 \sqrt{2}}{35800380} & \frac{4}{29} \\[0.5em]
	-\frac{206923}{5678316} & \frac{88867}{162729 \sqrt{2}}-\frac{200129}{487080} & \frac{4}{29} & \frac{1777340 \sqrt{2}-2440263}{1627290} \\
\end{pmatrix}\succeq 0.
$$

\subsubsection{Exceptional copositive matrix from a DNN matrix}

Now let $\epsilon^\prime = 1/10$ and $k=1.$ From the matrix $A^{(5)}$ in \eqref{eq-exA} we construct an exceptional copositive matrix $C$ as described in Subsection \ref{construction-basic-idea} by solving the feasibility SDP
\begin{equation*}
\begin{split}\label{eq-sdp2}
	&\Tr(CA^{(5)}) = -\epsilon^\prime,\\ 
	&\big(\sum_{i=1}^{n}x_i^2\big) \,q_C = w^{\text{T}}Bw \quad \text{with}\quad B\succeq 0,
\end{split}
\end{equation*}
where $w$ is the vector with all the degree at most 3 words in the variables $x_1, \ldots,x_n.$
Again, after a suitable rationalization, we get an exceptional copositive matrix
$$	
C=
\begin{pmatrix}
	17 & -\frac{91}{5} & \frac{33}{2} & \frac{38}{3} & -\frac{36}{5} \\[0.5em]
	-\frac{91}{5} & \frac{59}{3} & -\frac{53}{4} & 8 & \frac{33}{4} \\[0.5em]
	\frac{33}{2} & -\frac{53}{4} & \frac{39}{4} & -\frac{13}{2} & 8 \\[0.5em]
	\frac{38}{3} & 8 & -\frac{13}{2} & \frac{16}{3} & -\frac{13}{3} \\[0.5em]
	-\frac{36}{5} & \frac{33}{4} & 8 & -\frac{13}{3} & \frac{1373628701}{353935575} \\
\end{pmatrix}.
$$


\section{Quantifying the gap between $\COP_n$ and $\CP_n$}

In this section we prove our second main result (Theorem \ref{intro-vrad-of-our-sets}) on the estimates of volume radii of the cones from Definition \ref{def-cones}:

\begin{theorem}
    \label{vrad-of-our-sets} 
    We have that
    \begin{align*}
            \frac{1}{8\sqrt{2}}
            &\leq 
            \vrad(\CP^{(B_n)}_n)
            \leq
            \vrad(\nn^{(B_n)}_n)
            =\frac{1}{2}
            \leq 
            \vrad(\COP^{(B_n)}_n)
            \leq 
            1.
    \end{align*}
    In particular,
    $$
    \frac{1}{8\sqrt{2}}
    \leq
    \frac
    {\vrad(K^{(B_n)}_n)}{\vrad(\COP^{(B_n)}_n)}
    \leq 
    1,
    $$
    where $K\in \{\CP,\DNN,\psd,\nn,\SPN\}$.
\end{theorem}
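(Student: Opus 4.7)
I split the proof into three parts: the exact value $\vrad(\nn_n^{(B_n)}) = 1/2$, a chain of monotonicity-based bounds, and the one substantive inequality $\vrad(\CP^{(B_n)}_n) \geq 1/(8\sqrt 2)$.

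First I fix an orthonormal basis of $\Sym_n$ under the Frobenius inner product, namely $\{e_ie_i^T : i \leq n\} \cup \{(e_ie_j^T + e_je_i^T)/\sqrt 2 : i < j\}$, identifying $\Sym_n$ isometrically with $\RR^d$, $d = n(n+1)/2$. In these coordinates $\nn_n$ is exactly the nonnegative orthant and $B_n$ is the standard Euclidean unit ball, whence by origin symmetry $\Vol(\nn_n \cap B_n) = 2^{-d}\Vol(B_n)$ and $\vrad(\nn_n^{(B_n)}) = 1/2$. The inclusion chain $\CP_n \subseteq \DNN_n \subseteq \nn_n \subseteq \SPN_n \subseteq \COP_n \subseteq \Sym_n$ together with the monotonicity of $\vrad$ under set inclusion and the trivial $\vrad(B_n) = 1$ immediately yields every estimate in the theorem except the leftmost lower bound $1/(8\sqrt 2) \leq \vrad(\CP^{(B_n)}_n)$.

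For this remaining lower bound my plan is to use the Frobenius duality $\CP_n^* = \COP_n$ together with a Blaschke--Santal\'o-type inequality for dual cones, in the spirit of Blekherman \cite{Ble04,Ble06} and of \cite{KSZ}. Concretely, I pick an interior reference point $Y \in \CP_n$ (a natural candidate is a scalar multiple of $I_n$, which lies in $\CP_n$ as $I_n = \sum_i e_ie_i^T$), form the affine slice $\CP_n \cap \partial H_Y$ for $\partial H_Y = \{X : \langle X, Y\rangle = 1\}$, and compare its volume to that of the polar slice $\COP_n \cap \partial H_{Y'}$ via Blaschke--Santal\'o. This should yield an inequality of the form $\vrad(\CP^{(B_n)}_n) \cdot \vrad(\COP^{(B_n)}_n) \geq c$ with an absolute constant $c$, and combined with $\vrad(\COP^{(B_n)}_n) \leq 1$ from the first paragraph this gives $\vrad(\CP^{(B_n)}_n) \geq c$; tracking the constant should produce $c = 1/(8\sqrt 2)$. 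This step is the matrix analogue of the differential-inner-product argument from \cite{KSZ}, with the Frobenius inner product on $\Sym_n$ playing the role of the differential inner product on even quartics, as the authors hint in their remark.

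The main obstacle is the calibration of constants in the Santal\'o step. A more elementary route, namely exhibiting the diagonally dominant nonnegative subcone $D_n = \{A \in \nn_n : a_{ii} \geq \sum_{j \neq i} a_{ij} \text{ for all } i\}$ inside $\CP_n$ via the classical decomposition $A = \sum_i (a_{ii} - \sum_{j \neq i} a_{ij})e_ie_i^T + \sum_{i<j} a_{ij}(e_i+e_j)(e_i+e_j)^T$, and then estimating $\Vol(D_n \cap B_n)$ by inscribing a scaled orthant inside it, appears to give only $\vrad(D_n \cap B_n) = O(1/\sqrt n)$: after the triangular change of coordinates $w_i = a_{ii} - \sum_{j \neq i} a_{ij}$, $b_{ij} = \sqrt 2 a_{ij}$, the Frobenius norm becomes a quadratic form whose largest eigenvalue on $\RR^d_{\geq 0}$ is of order $n$. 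Hence the duality-based argument, with its sharp universal constant, is where the real work of the proof lies.
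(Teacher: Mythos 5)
Your first two steps are sound and coincide with the paper's proof: the sign-symmetry argument giving $\vrad(\nn_n^{(B_n)})=\tfrac12$ is exactly the paper's Claim 1, and the monotonicity chain together with $\vrad(B_n)=1$ disposes of all inequalities except the leftmost one. The genuine gap is precisely there: the bound $\vrad(\CP_n^{(B_n)})\geq \tfrac{1}{8\sqrt2}$ is never proved, only planned, and the plan as stated would not go through. First, the Blaschke--Santal\'o inequality bounds a volume product from \emph{above}; to deduce a lower bound on $\vrad(\CP_n^{(B_n)})$ from $\vrad(\COP_n^{(B_n)})\leq 1$ you would need a \emph{reverse} Santal\'o inequality of Bourgain--Milman/Kuperberg type, and ``tracking the constant'' there will not produce an explicit clean value such as $\tfrac{1}{8\sqrt2}$. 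Second, and more basically, polarity does not interact with ball sections of dual cones in the way your plan assumes: the polar body of $\CP_n\cap B_n$ with respect to the Frobenius pairing is $\conv\bigl((-\COP_n)\cup B_n\bigr)$, an unbounded set, not $\COP_n\cap B_n$. Santal\'o-point/volumetric-center arguments live on compact hyperplane (base) sections of the cones -- which is exactly why \cite{KSZ} and the Seeger--Torki framework work with $\partial H_x$ rather than with $B_n$ -- and transferring such a bound from hyperplane sections to ball sections is an extra, nontrivial step your proposal does not address. So the one substantive inequality of the theorem is left unestablished.

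The paper proves that inequality by a short, duality-free argument that is in fact the dimension-free substitute for the diagonally dominant route you correctly discarded. It shows the inclusion $\nn_n^{(B_n)}\subseteq \sqrt2\,\bigl(\cp_n^{(B_n)}-\cp_n^{(B_n)}\bigr)$, verifying it on the generators of $\nn_n$ inside $B_n$: the matrices $E_{ii}$ lie in $\cp_n^{(B_n)}$, while for $i\neq j$ one uses $(e_i+e_j)(e_i+e_j)^T=E_{ij}+E_{ji}+E_{ii}+E_{jj}\in 2\,\cp_n^{(B_n)}$ to write $\tfrac{1}{\sqrt2}(E_{ij}+E_{ji})=\tfrac{1}{\sqrt2}(e_i+e_j)(e_i+e_j)^T-\tfrac{1}{\sqrt2}(E_{ii}+E_{jj})\in\sqrt2\,\bigl(\cp_n^{(B_n)}-\cp_n^{(B_n)}\bigr)$. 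Then the Rogers--Shephard inequality \cite{RS57} gives $\vrad\bigl(\cp_n^{(B_n)}-\cp_n^{(B_n)}\bigr)\leq 4\,\vrad(\cp_n^{(B_n)})$, so Claim 1 yields $\tfrac12=\vrad(\nn_n^{(B_n)})\leq \sqrt2\cdot 4\,\vrad(\cp_n^{(B_n)})$, i.e.\ the desired $\vrad(\cp_n^{(B_n)})\geq\tfrac{1}{8\sqrt2}$. In other words, instead of inscribing a polyhedral subcone of $\CP_n$ into $\nn_n$ (which, as you computed, loses a factor of order $\sqrt n$), one inscribes the nonnegative section into the \emph{difference body} of the completely positive section and pays only the universal Rogers--Shephard constant; this is the missing idea in your proposal.
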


\begin{proof}
First we establish two claims.\\

\noindent \textbf{Claim 1.} 
$
\vrad{(\nn_n^{(B_n)})}=\frac{1}{2}.
$\\

\noindent\textit{Proof of Claim 1.}
    We first show that $B_n$ is a disjoint union of 
    $2^{\dim \Sym_n}$ 
    copies of 
    $\nn_n^{(B_n)}$ and hence
    $$\Vol(\nn_n^{(B_n)})=2^{-\dim \Sym_n}
    \cdot
    \Vol(B_n).$$
    Indeed,
    for 
    $A=[a_{ij}]_{ij}\in \{0,1\}^{n\times n}\cap \Sym_n$ let
    $S_{A}:=B_n\cap H_{A}$,
    where 
    $$H_{A}=
    \{
    [b_{ij}]_{ij}\in \Sym_n\colon
    (-1)^{a_{ij}+1} \cdot b_{ij}\geq 0
    \text{ for all } i,j
    \}.
    $$
    The ones (zeros resp.) in the matrix $A$ thus determine the entries $b_{ij}$ that have positive (negative resp.) sign.
    Note that  $$ B_n=\bigsqcup_{A\in \{0,1\}^{n\times n}\cap \Sym_n} S_A$$ and $\Vol S_A=\Vol \nn_n^{(B_n)}$ for every $A\in \{0,1\}^{n\times n}\cap \Sym_n.$ Hence
    $$ \Vol B_n=\sum_{A\in \{0,1\}^{n\times n}\cap \Sym_n}\Vol S_A = 2^{\dim \Sym_n}\Vol(\nn_n^{(B_n)}),$$
    which immediately proves Claim 1.
    $\hfill\square$\\

Next, denote by
$$\symm(\cp^{(B_n)}_n):=
\cp^{(B_n)}_n-\cp^{(B_n)}_n
=\{U-V\colon U,V\in \cp^{(B_n)}_n\}$$
the difference body of $\cp^{(B_n)}_n$.\\

\noindent \textbf{Claim 2.}
$
\cp^{(B_n)}_n\subseteq
\nn^{(B_n)}_n
\subseteq 
\sqrt{2}\symm(\cp^{(B_n)}_n)$.\\

\noindent\textit{Proof of Claim 2.}
    The left inclusion
    is clear. 
    To prove the right inclusion
    it suffices to prove that every extreme point of $\nn^{(B_n)}_{n}$
    is contained in $\symm(\cp^{(B_n)}_n)$.
    Note that the extreme points of $\nn^{(B_n)}_{n}$
    are of two types:
    \begin{align}
        \label{ext-nn-type1}
            E_{ii}&\qquad
            \text{for some }i=1,\ldots,n,\\
        \label{ext-nn-type2}
            \displaystyle 
            \frac{1}{\sqrt{2}}
            (E_{ij}+E_{ji})&\qquad 
            \text{for some }i,j=1,\ldots,n,\ i\neq j,
    \end{align}
    where $E_{ij}$ are the standard matrix basis, i.e., the only nonzero entry of $E_{ij}$ is 1 at position $(i,j)$.
    The extreme points of the form \eqref{ext-nn-type1} clearly belong to
    $
    \cp^{(B_n)}_n
    \subseteq
    \symm(\cp^{(B_n)}_n)
    $.
    It remains to study the extreme points of the form \eqref{ext-nn-type2}.
  Note that
$$
F := E_{ij}+E_{ji} +E_{ii} + E_{jj} = xx^\text{T}  \in  2\cp_n^{(B_n)},
$$
where $x \in \RR^n$ is a vector with zeros except at positions $i$ and $j,$ where it has ones. Hence
\begin{align*}
    \frac{1}{\sqrt{2}}
        (E_{ij}+E_{ji})
        &=  \frac{1}{\sqrt{2}} F - \frac{1}{\sqrt{2}}\left(E_{ii} + E_{jj}\right) \in 
        \sqrt{2}\symm(\cp_n^{(B_n)}),
\end{align*}
 which concludes the proof of Claim 2. 
    $\hfill\square$
 \\
 
 By the Rogers-Shepard inequality \cite[Theorem 1]{RS57}
 we have that 
 \begin{align}
      \label{RS-ineq-cp}
\vrad(\symm(\cp_n^{(B_n)}))
 &\leq 
 4\vrad(\cp_n^{(B_n)}).
 \end{align}
 By \eqref{RS-ineq-cp} and Claims $1$ and $2,$ it follows that
\begin{align}
 \begin{split}
     \label{RS-ineq-cp-combined}
     \frac{1}{8\sqrt{2}}
 &\leq \vrad(\cp_n^{(B_n)})
 \leq \frac{1}{2}.
  \end{split}
 \end{align}
The statements in Theorem \ref{vrad-of-our-sets} now follow from Claim 1 and \eqref{RS-ineq-cp-combined}.
\end{proof}

\end{document}